\let\mathcal=\mathscr
\newtheorem{thm}{Theorem}[section]
\newtheorem{lemma}[thm]{Lemma}
\newtheorem{prop}[thm]{Proposition}
\newtheorem{defin}[thm]{Definition}
\theoremstyle{remark}
\newtheorem{rem}[thm]{Remark}
\newtheorem{ex}[thm]{Example}
\newenvironment{remark}{\begin{rem}\rm}{\qee\end{rem}}
\newenvironment{example}{\begin{ex}\rm}{\qee\end{ex}}
\newcommand{\End}{\mbox{\it End}\,}
\newcommand{\cA}{{\mathcal A}}
\newcommand{\cC}{{\mathcal C}}
\newcommand{\cE}{{\mathcal E}}
\newcommand{\cF}{{\mathcal F}}
\newcommand{\cO}{{\mathcal O}}
\newcommand{\calL}{{\mathcal L}}
\newcommand{\cM}{{\mathcal M}}
\newcommand{\cN}{{\mathcal N}}
\newcommand{\cI}{{\mathcal I}}
\newcommand{\cK}{{\mathcal K}}
\newcommand{\cQ}{{\mathcal Q}}
\newcommand{\Ext}{\mbox{\rm Ext}}
\newcommand{\ati}{{{\mathcal D}_{\cE}}}
\newcommand{\Hom}{\operatorname{Hom}}
\newcommand{\gr}{\operatorname{gr}}
\newcommand{\rk}{\operatorname{rk}}
\newcommand{\Rep}{\operatorname{Rep}}
\newcommand{\kmod}{{\mbox{$\Bbbk$\mbox{\bf -mod}}}}
\newcommand{\g}{{\mathfrak g}}
\newcommand{\qee}{\mbox{\hspace{0.2mm}}\hfill$\triangle$}
\newcommand{\Der}{\operatorname{Der}}
\newcommand{\cEnd}{{\mathcal E}\!\text{\it nd}}
\newcommand{\cHom}{\mathscr{H}\!\text{\em om}}
\begin{document}
%

\begin{center}
	{\large \bf  LIE  ALGEBROID COHOMOLOGY \\[5pt] AS A DERIVED FUNCTOR} \\[30pt]
	{\sc Ugo Bruzzo}\\[10pt] \small 
	Area di Matematica,  Scuola Internazionale Superiore di Studi \\ Avanzati (SISSA), Via Bonomea 265, 
	34136 Trieste, Italy; \\[3pt] Department of Mathematics, University of Pennsylvania, \\ 209 S 33rd st., Philadelphia, PA 19104-6315, USA; \\[3pt] Istituto Nazionale di Fisica Nucleare, Sezione di Trieste
 \end{center}
 
 \bigskip
\begin{quote}
\small  {\sc Abstract.}    We show that the hypercohomology of the Chevalley-Eilenberg-de Rham complex of a Lie algebroid $\calL$ over a   scheme with coefficients in an $\calL$-module  can be expressed as a derived functor. We use this fact to study a Hochschild-Serre type spectral sequence attached to an extension of Lie algebroids.
\end{quote}

 \let\svthefootnote\thefootnote
\let\thefootnote\relax\footnote{
\hskip-\parindent {\em Date: }  Revised 14 April 2017  \\
{\em 2000 Mathematics Subject Classification:} 14F40, 18G40, 32L10,   55N35, 55T05 \\ 
{\em Keywords:} Lie algebroid cohomology,  derived functors, spectral sequences \\
Research partly supported by {\sc indam-gnsaga}. {\sc u.b.} is a member of the {\sc vbac} group.
}
\addtocounter{footnote}{-1}\let\thefootnote\svthefootnote
\setcounter{tocdepth}{1}
{\small\tableofcontents}

\bigskip\section{Introduction}

As it is well known, the  cohomology groups of a Lie algebra $\g$ over a ring $A$ with coefficients in a $\g$-module $M$ can be computed directly from the Chevalley-Eilenberg complex, or as the derived functors of the invariant submodule functor, i.e., the functor which with ever $\g$-module $M$ associates the submodule of $M$ 
$$ M^\g = \{ m \in M \,\vert\, \rho(\g)(m)=0\},$$
where $\rho\colon\g\to\End_A(M)$ is a representation of $\g$ (see e.g.\ \cite{weibel}).
In this paper we show an analogous result for the hypercohomology of the Chevalley-Eilenberg-de Rham complex of a Lie algebroid $\calL$ over a scheme $X$, with coefficients in a representation $\cM$ of $\calL$ (the notion of hypercohomology is recalled in Section \ref{gen}). An important tool for the proof of this result will be the interplay of the theory of Lie algebroids with that of Lie-Rinehart algebras, as we shall indeed use several results from Rinehart's pioneering paper \cite{Rinehart63}.

This approach to Lie algebroid hypercohomology permits us to look at the spectral sequence that one can attach, as shown in \cite{BMRT}, to an extension of Lie algebroids (a ``Hochschild-Serre'' spectral sequence), as the Grothendieck spectral sequence associated with the right derived functors of a composition of left-exact functors, affording us a better understanding of its properties. In a similar way, one can define a kind of local-to-global spectral sequence, which relates the hypercohomology of the Chevalley-Eilenberg-de Rham complex of a Lie algebroid $\calL$   with the cohomologies of the Lie-Rinehart algebras $\calL(U)$, where $U\subset X$ is an affine open subscheme.

We would like to stress that the only assumptions we make about the base scheme $X$ are that it is a noetherian separated scheme over a field, and no regularity hypothesis is made. Moreover, the same results can be obtained in the case of Lie algebroids over analytic spaces, and, in particular, complex manifolds (technically, whenever in the following an affine subscheme or affine open cover will be used, one should instead use a Stein subspace or Stein cÁover).
Thus this theory may be in principle applied to study vector fields on singular varieties, K\"ahler forms on singular Poisson varieties, coisotropic singular subvarieties of Poisson varieties, singular varieties acted on by an algebraic group, etc. 

One should mention that derived functors in connection with Lie-Rinehart algebras were also considered by J.~Huebschmann \cite{Huebsch09}.

The contents of this paper are as follows. Section \ref{gen} gives some generalities about Lie algebroids and discusses their relation with Lie-Rinehart algebras. In Section \ref{derived} we discuss our main result, the isomorphism of the Chevalley-Eilenberg-de Rham hypercohomology with the derived functors of the invariant submodule functor. Section \ref{HSss} is devoted to the study of a Hochschild-Serre spectral sequence
and of the local-to-global spectral sequence, and some related results.

\smallskip
\noindent {\bf Acknowledgments.} This paper was mostly written during a visit at the Department of Mathematics of the University of Pennsylvania in April-May 2016. I thank Penn for support, and the 
colleagues and staff at Penn for their usual warmth and hospitality, and in particular Murray Gernsternhaber and Jim Stasheff for their kind invitation to present this result in the Penn Deformation Theory Seminar. A thank-you also to my friend Volodya Rubtsov who introduced me to Lie algebroids and  Lie-Rinehart algebras, and to Ettore Aldrovandi, Geoffrey Powell and Pietro Tortella for useful conversations. Finally, I thank   the referee whose suggestions allowed me to improve the presentation.
 
\bigskip\section{Generalities on Lie algebroids }\label{gen}
     
     All schemes will be noetherian and separated.
Let $X$ be a 
scheme over a field $\Bbbk$. We shall denote by $\cO_X$ the sheaf of regular functions on $X$, by $\Bbbk_X$ the constant sheaf on $X$ with stalk $\Bbbk$, and by $\Theta_X$ the tangent sheaf of $X$ (the sheaf of derivations of the structure sheaf $\cO_X$), which is a sheaf of $\Bbbk_X$-Lie algebras.

\subsection{Lie algebroids} A   Lie algebroid $\calL$ on $X$ is a coherent $\cO_X$-module $\calL$ equipped with:
\begin{itemize}
\item a $\Bbbk$-linear Lie bracket defined on   sections of $ \calL$, satisfying the Jacobi identity;
\item a   morphism of $\cO_X$-modules $a\colon \calL \to\Theta_X$, 
called the {\em anchor} of $\calL$, which is also a morphism of sheaves of $\Bbbk_X$-Lie algebras.
\end{itemize}
The  Leibniz rule
\begin{equation}\label{leibniz} [s,ft] = f[s,t] +a(s)(f)\,t  \end{equation}
is required to hold 
for all sections $s,t$ of $\calL$ and $f$ of $\cO_X$ (actually the Leibniz rule and the Jacobi identity imply that the anchor is a morphism of $\Bbbk_X$-Lie algebras).

A morphism $(\calL,a)\to (\calL',a')$ of Lie algebroids defined over the same scheme $X$ is a morphism of $\cO_X$-modules $f\colon\calL\to\calL'$, which is compatible with the brackets defined in $\calL$ and in $\calL'$, and such that $a'\circ f=a$.  Note that this implies
that the kernel of a morphism of Lie algebroids has a trivial anchor, i.e., it is a sheaf of $\cO_X$-Lie algebras.

We  introduce the {\em Chevalley-Eilenberg-de Rham complex}  of $\calL$, which is a  sheaf of
differential graded algebras. This is  $\Lambda^\bullet_{\cO_X}\calL^\ast$ as a sheaf of $\cO_X$-modules, with a product  given by the wedge product, and  a $\Bbbk$-linear differential  $d_\calL\colon \Lambda^\bullet_{\cO_X}\calL^\ast\to \Lambda^{\bullet+1}_{\cO_X}\calL^\ast$   defined by the  formula
  \begin{eqnarray*}\label{diff}
(d_\calL\xi)(s_1,\dots,s_{p+1}) &=& 
\sum_{i=1}^{p+1}(-1)^{i-1}a(s_i)(\xi(s_1,\dots,\hat s_i,
\dots,s_{p+1})) \\ & + & \sum_{i<j}(-1)^{i+j}
\xi([s_i,s_j],\dots,\hat s_i,\dots,\hat s_j,\dots,s_{p+1})
\end{eqnarray*}
  for   $s_1,\dots,s_{p+1}$ sections of $\calL$, and $\xi$ a section of $\Lambda^p_{\cO_X}\calL^\ast$.
  The name of this complex comes of course from the fact that when $X$ is a point, $\calL$ is a Lie algebra, and the complex reduces to the Chevalley-Eilenberg complex of that Lie algebra; while when
  $\calL = \Theta_X$, and $X$ is smooth, one gets the de Rham complex of $X$.
  
  The hypercohomology\footnote{We remind the reader, if needed, that given an additive, left-exact functor between abelian categories $F \colon \mathfrak A \to \mathfrak B$, where $\mathfrak A$ is assumed to have enough injectives, the hyperfunctors
  $\mathbb H^iF\colon K_+(\mathfrak A)\to \mathfrak B$, with $i\ge 0$, are defined as 
  $\mathbb H^iF(A^\bullet) = H^i(F(I^\bullet))$, where $I^\bullet$ is a complex of injective objects quasi-isomorphic to $A^\bullet$
  (here $K_+(\mathfrak A)$   is the category of complexes of objects in $\mathfrak A$ bounded from below). When
  $\mathfrak A = \mathfrak{Qcoh}(X)$ for a scheme $X$, $\mathfrak B$ is the category of abelian groups, and $F$ is the global section functor $\Gamma$, the hyperfunctors $ \mathbb H^i\Gamma$ are called the {\em hypercohomology groups,} denoted $\mathbb H^i(X,\cF^\bullet)$ for a complex $\cF^\bullet$ of quasi-coherent $\cO_X$-modules.}
of the complex $(\Lambda^\bullet_{\cO_X}\calL^\ast,d_\calL)$ is called the {\em   Lie algebroid cohomology} of $\calL$.
On a complex manifold, if $\calL$ is locally free as an $\cO_X$-module, this cohomology is  isomorphic to the Lie algebroid cohomology of the smooth complex Lie algebroid $L$ obtained by matching (in the sense of \cite{Lu97,Mokri,GSX}) the holomorphic Lie algebroid $\calL$ with the anti-holomorphic tangent bundle $T_X^{0,1}$ \cite{GSX,BR-cohom}. Since $\rk L = \rk  \calL + \dim X$,  the hypercohomology of the complex $(\Lambda^\bullet_{\cO_X}\calL^\ast,d_\calL)$ in this case vanishes in degree higher than $\rk  \calL + \dim X$. 
        
\begin{ex}[The Atiyah algebroid]
A fundamental example of Lie algebroid is $\ati$, the {\em Atiyah algebroid} associated with a coherent $\cO_X$-module $\cE$.
This is defined as the sheaf of first order differential operators   on $\cE$   having scalar symbol, i.e., satisfying the following property: for all open subsets $U\subset X$, and every section $D\in\ati(U)$,
there is a vector field $v\in\Theta_X(U)$ such that
$$D(fs) = fD(s) + v(f) s $$
for all $f\in\cO_X(U)$ and $s\in\calL(U)$. Note that the vector field $v$ is unique.   $\ati$ sits inside
the   exact sequence of sheaves of $\cO_X$-modules
\begin{equation} \label{atiyah} 0 \to  \cEnd_{\mathcal O_X}(\cE) \to \ati \xrightarrow{\sigma } \Theta_X ,
\end{equation} where the morphisms $\sigma$ mapping $D$ to $v$, called {\it the symbol map}, plays the role of   anchor. The bracket is given by the commutator of differential operators.
Note that, when $\cE$ is locally free, the anchor is surjective so that $\ati$ is an extension of $\Theta_X$ by $\cEnd_{\cO_X} (\cE)$.
\end{ex}

\subsection{Representations} The notion of   representation of a Lie algebroid can be neatly introduced by using the Atiyah algebroid.
\begin{defin}
A {\em representation} of Lie algebroid $\calL$ on a coherent $\cO_X$-module $\cM$ is a morphism of  Lie algebroids $\rho: \calL \to \mathcal{D}_{\!\!\cM}$. \end{defin}
Thus, given a section $s$ of $\calL$, $\rho(s)$ acts on sections of $\cM$ as a first-order differential operator, and $\rho([s,t])(m) = [\rho(s),\rho(t)] (m)$, where the bracket in the right-hand side is the commutator of operators. Moreover,
$a(s)$ is required to be the symbol of $\rho(s)$ (here $a$ is the anchor of $\calL$).

We shall denote by $\Rep(\calL)$
the category of representations of the Lie algebroid $\calL$. Given two
representations $(\cM,\rho)$ and $(\cN,\beta)$, a morphism $(\cM,\rho)\to (\cN,\beta)$ is a morphism of $\cO_X$-modules $f\colon\cM\to\cN$ such that
$\beta(s) (f(m)) = f(\rho(s)(m))$ for all sections $s$ of $\calL$ and $m$ of $\cM$.
The category $\Rep(\calL)$ is abelian.

Given a  representation $(\cM,\rho)$ of $\calL$, one can introduce the twisted modules $ \cM\otimes_{\cO_X} \Lambda^k\calL^\ast$, with a differential

 \begin{eqnarray}\label{diffoid}
(d_\rho \xi)(s_1,\dots,s_{p+1}) &=& 
\sum_{i=1}^{p+1}(-1)^{i-1}\rho (s_i)(\xi(s_1,\dots,\hat s_i,
\dots,s_{p+1})) \\ & + & \sum_{i<j}(-1)^{i+j}
\xi([s_i,s_j],\dots,\hat s_i,\dots,\hat s_j,\dots,s_{p+1}).
\end{eqnarray}
The hypercohomology of the complex $(\cM\otimes_{\cO_X}\Lambda^\bullet\calL^\ast,d_\rho)$ is called the cohomology of $\calL$ with values in $(\cM,\rho)$, denoted $\mathbb H^\bullet(\calL;\cM,\rho)$ or $\mathbb H^\bullet(\calL;\cM)$.

\subsection{Lie-Rinehart algebras}
We explore now the relation of Lie algebroids over algebraic schemes with Lie-Rinehart algebras. 
\begin{defin}
Let $A$ be a unital commutative algebra over a field $\Bbbk$. A {\em $(\Bbbk,A)$-Lie-Rinehart algebra} is a pair $(L,a)$, where $L$ is an $A$-module equipped with a $\Bbbk$-linear Lie algebra bracket, and $a\colon L \to \operatorname{Der}_\Bbbk(A)$ a representation of $L$ in $\operatorname{Der}_\Bbbk(A)$  (the anchor) that satisfies the Leibniz rule \eqref{leibniz}, where now $s,t\in L$ and $f\in A$.  \end{defin}
For brevity, we shall often say that $L$ is a Lie-Rinehart algebra over $\Bbbk$, understanding the algebra $A$ and the representation $a$.  

\begin{example}
Let $M$ be an $A$-module. Given a derivation $\bar D\in \operatorname{Der}_\Bbbk(A)$,
a derivation $D$ of $M$ over $\bar D$ is a $\Bbbk$-linear morphism $D\colon M\to M$ such that
$$D(x \, m) = x \, D(m) + \bar D (x) \,m$$ 
for all $x\in A$, $m\in M$. The set  $D(M)$ of derivations of $M$ over derivations of $A$
is an $A$-module. The morphism $\sigma\colon D(M)\to\operatorname{Der}_\Bbbk(A)$,
mapping $D$ to $\bar D$, is a morphism of Lie $\Bbbk$-algebras, and $(D(M),\sigma)$ is
a  $(\Bbbk,A)$-Lie-Rinehart algebra. Actually $\sigma$ is a symbol
 map, so that $D(M)$
could be called the space of derivations of $M$ with scalar symbol.
Moreover, the sequence of $A$-modules
$$ 0 \to \operatorname{End}_A(M) \to D(M) \stackrel{\sigma}{\longrightarrow} \operatorname{Der}_\Bbbk(A) $$
is exact, and
the symbol morphism $\sigma$ is surjective when $M$ is free over $A$. 
 The pair $(D(M),\sigma)$ is  the {\em Atiyah-Lie-Rinehart algebra} of the module $M$. 
\end{example}

Again, a representation of a $(\Bbbk,A)$-Lie-Rinehart algebra $L$ is an $A$-module $M$ with a morphism of Lie-Rinehart algebras
$L\to  D(M)$. A cohomology  $H^\bullet(L;M)$ is defined by means of a differential analogous to the one in eq.~\eqref{diffoid}.

Assume that $L$ is finitely generated over $A$. 
If we take $X = \operatorname{Spec} A$ and  localize the $A$-module $L$, the resulting $\cO_X$-module $\calL$ has a Lie algebroid structure. Conversely,
given a Lie algebroid $\calL$ over a scheme $X$, for every open $U\subset X$, 
$\calL(U)$ is a  $(\Bbbk, \cO_X(U))$-Lie-Rinehart algebra, finitely generated as an $\cO_X(U)$-module. This establishes the following result. Let $\mbox{\bf{Lie-Alg}}_\Bbbk$ be the category of Lie algebroids over  schemes over $\Bbbk$, and let $\mbox{\bf{Lie-Alg}}_\Bbbk^{\mbox{\tiny aff}}$ be the full subcategory of Lie algebroids over   affine schemes. Moreover, let $\mbox{\bf{Lie-Rin}}_\Bbbk$ be the category of Lie-Rinehart algebras $L$ over the field $\Bbbk$, with $L$ finitely generated over  a $\Bbbk$-algebra $A$. Morphisms  are defined in the obvious way.

\begin{prop} The categories  $\mbox{\bf{Lie-Alg}}_\Bbbk^{\mbox{\tiny\rm aff}}$ and $\mbox{\bf{Lie-Rin}}_\Bbbk$ are equivalent. In particular, if we fix a unital commutative $\Bbbk$-algebra $A$, and take $X = \operatorname{Spec} A$, the categories  $\mbox{\bf{Lie-Alg}}(X)$   of Lie algebroids over $X$, and  $\mbox{\bf{Lie-Rin}}_{(\Bbbk,A)}$ of $(\Bbbk,A)$-Lie-Rinehart algebras, are equivalent.\footnote{$\mbox{\bf{Lie-Alg}}_\Bbbk^{\mbox{\tiny\rm aff}}$ is a fibered category over the category of affine $\Bbbk$-schemes, and analogously, $\mbox{\bf{Lie-Rin}}_\Bbbk^{\mbox{\tiny op}}$ is a fibered category over the category of unital $\Bbbk$-algebras.}
\label{equiv}
\end{prop}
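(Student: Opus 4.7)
The plan is to exhibit a pair of quasi-inverse functors $\Gamma$ and $\widetilde{(-)}$ extending the Serre equivalence between finitely generated $A$-modules and coherent sheaves on $X=\operatorname{Spec} A$, and show that they carry the additional Lie-algebroid/Lie-Rinehart structure back and forth. In one direction, given a Lie algebroid $(\calL,a)$ over $X$, let $L = \Gamma(X,\calL)$. Coherence and noetherianity give that $L$ is a finitely generated $A$-module; taking global sections of the $\Bbbk$-linear bracket and of the anchor $a\colon\calL\to\Theta_X$ yields a $\Bbbk$-bilinear Lie bracket on $L$ together with a morphism $a_L\colon L\to\Gamma(X,\Theta_X)=\Der_\Bbbk(A)$, and the Leibniz rule \eqref{leibniz} and Jacobi identity descend from sections to global sections without effort.

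In the other direction, given a $(\Bbbk,A)$-Lie-Rinehart algebra $(L,a)$ with $L$ finitely generated over $A$, put $\calL := \tilde L$; this is a coherent $\cO_X$-module on $X=\operatorname{Spec} A$. The anchor is the easy part: since $\Theta_X$ is quasi-coherent with $\Gamma(X,\Theta_X)=\Der_\Bbbk(A)$, the $A$-linear map $a\colon L\to\Der_\Bbbk(A)$ sheafifies to a morphism $\calL\to\Theta_X$ of $\cO_X$-modules. The real content is producing the $\Bbbk_X$-linear Lie bracket on $\calL$, because the bracket on $L$ is only $\Bbbk$-linear, so the usual sheafification of $A$-linear maps does not apply.

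The key step, which I expect to be the main obstacle, is to show that for every $f\in A$ the localization $L_f$ carries a canonical $(\Bbbk,A_f)$-Lie-Rinehart structure extending that of $L$. The bracket is forced by the Leibniz rule: the identities $[s,t/f^n]=f^{-n}[s,t]-nf^{-n-1}a(s)(f)\,t$ and its symmetric version dictate the formula
\begin{equation*}
[s/f^m,\,t/f^n] \;=\; \frac{[s,t]}{f^{m+n}} \;-\; \frac{n\,a(s)(f)}{f^{m+n+1}}\,t \;+\; \frac{m\,a(t)(f)}{f^{m+n+1}}\,s,
\end{equation*}
and one checks that it is well defined on equivalence classes, $\Bbbk$-bilinear, antisymmetric, satisfies Jacobi, and is compatible with the localized anchor $L_f\to\Der_\Bbbk(A_f)$; this is essentially Rinehart's universal construction in \cite{Rinehart63} applied to a multiplicative set. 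Compatibility of these structures under further localization $A_f\to A_{fg}$ gives a sheaf of Lie brackets on the standard basis of $X$, which then extends uniquely to $\calL=\tilde L$ by the sheaf-on-a-basis principle.

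It remains to check that the two functors are quasi-inverse and respect morphisms. For a coherent $\calL$ the natural isomorphism $\widetilde{\Gamma(X,\calL)}\iso\calL$ of $\cO_X$-modules is a morphism of Lie algebroids because bracket and anchor agree after taking global sections and restrictions to principal opens, which by the previous paragraph determine them uniquely. Conversely, $\Gamma(X,\tilde L)=L$ tautologically with matching structures. A morphism of Lie algebroids over $X$ gives, on global sections, a morphism of $A$-modules compatible with bracket and anchor, hence a Lie-Rinehart morphism; a morphism of Lie-Rinehart algebras sheafifies and commutes with the localized brackets by the universal formula above, giving a Lie-algebroid morphism. The global statement about $\mbox{\bf Lie-Alg}^{\text{aff}}_\Bbbk$ and $\mbox{\bf Lie-Rin}_\Bbbk$ then follows by letting $A$ vary, and the fibered-category footnote is immediate from functoriality of the construction in $A$.
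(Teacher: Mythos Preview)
Your argument is correct and follows exactly the route the paper indicates: the paper does not give a formal proof but simply records, in the paragraph preceding the proposition, that localizing a finitely generated Lie--Rinehart algebra produces a Lie algebroid on $\operatorname{Spec}A$ and that sections of a Lie algebroid over any open are a Lie--Rinehart algebra, declaring that ``this establishes the following result.'' You have supplied precisely the details behind that sentence --- the explicit localized bracket, well-definedness, the sheaf-on-a-basis gluing, and the verification that $\Gamma$ and $\widetilde{(-)}$ are quasi-inverse --- so there is nothing to compare beyond noting that your write-up is the fleshed-out version of the paper's one-line sketch.
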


 \bigskip      
\section{Cohomology as derived functor}\label{derived}

\subsection{The universal enveloping algebroid} \label{univalgebroid}
In \cite{Rinehart63}, the universal enveloping algebra $\mathfrak U(L)$ of a $(\Bbbk,A)$-Lie-Rinehart algebra $L$ was introduced.  Let us recall the definition. One gives the direct sum $\bar L  = A\oplus L$ a structure of Lie algebra over $\Bbbk$ by letting 
$$ [f \oplus x, g \oplus y ] = \left(a(x)(g) - a(y)(h)\right)\oplus [x,y],$$
where $a$ is the anchor of $L$.
Let us denote by $\mathfrak U(\bar L )$ the usual enveloping algebra of $\bar L $, and denote 
by $\iota\colon \bar L  \to \mathfrak U(\bar L )$ the natural morphism. Let $\mathfrak U(\bar L )^+$ the subalgebra generated by the image of $\iota$. The universal    enveloping algebra $\mathfrak U(L)$ of $L$ is the quotient 
$\mathfrak U(\bar L )^+/J(L)$, where $J(L)$ is the ideal generated by the elements
$$ \iota(f u ) -\iota(f)\iota(u)$$
for $f\in A$ and $u \in \bar L $. Note that in $\mathfrak U(L)$ the identity
$$ s \cdot f - f \cdot s = a(s)(f) $$
holds for every $f\in A$ and $s\in L$.

The induced map  $\iota \colon A \to \mathfrak U(L)$ turns out to be a monomorphism,
and gives $\mathfrak U(L)$   both a left and right $A$-module structure.  
Moreover, the anchor $a$ gives $A$ a structure of $\mathfrak U(L)$-module, induced by
$$ g\cdot f = gf,\qquad s \cdot f = a(s)(f)$$
for $s\in L$ and $g,f\in A$. This allows one to define a morphism (augmentation map) $\varepsilon\colon\mathfrak U(L)\to A $
by mapping $u \in \mathfrak U(L)$ to $ u \cdot 1$.

The universal enveloping algebra $\mathfrak U(L)$ satisfies a universality property \cite{Mord}, which implies that, 
given a morphism $f\colon L\to L'$ of $(\Bbbk,A)$-Lie-Rinehart algebras, there is a unique morphism $\mathfrak U(f) \colon \mathfrak U(L)\to \mathfrak U(L')$ which, when restricted to the natural image of $L$ into $\mathfrak U(L)$,  coincides with $f$. According to the discussion before Proposition \ref{equiv}, we can define the universal enveloping algebroid $\mathfrak U(\calL)$ of a Lie algebroid $\calL$ over a scheme $X$
as the sheaf of algebras whose space of sections on any affine open subset $U\subset X$ is the universal enveloping algebra $\mathfrak U(\calL(U))$ of the $(\Bbbk,\cO_X(U))$-Lie-Rinehart algebra $\calL(U)$. The functoriality of the universal enveloping algebra ensures that $\mathfrak U(\calL)$ is well defined. Each local  universal enveloping algebra has a morphism  $\mathfrak U(\calL(U))\to\cO_X(U)$ and these give rise to a morphism $\varepsilon\colon\mathfrak U(\calL) \to\cO_X $.

If $L$ is a Lie-Rinehart algebra, there is an equivalence of categories between the category of representations of $L$, and the category of  (left) $\mathfrak U(L)$-modules. Indeed if $(M,\rho)$ is a representation of $L$, then
$M$ becomes a $\mathfrak U(L)$-module by letting
$$ f \cdot m = fm, \qquad s\cdot m =  \rho(s)(m)$$
for $m\in M$, $f\in A$ and $s\in L$. 
Viceversa, if $M$ is a $\mathfrak U(L)$-module, one defines a representation $\rho$ of $L$ on $M$ by letting
$$\rho(s) (m) = s\cdot m.$$

 This immediately produces the analogous result for Lie algebroids. We shall henceforth assume that $\calL$ is a locally free Lie algebroid on a   scheme $X$ over a field $\Bbbk$. We shall denote by
 $\mathfrak U(\calL)${\bf{-mod}} the category of left $\mathfrak U(\calL)$-modules.

\begin{prop} The category $\Rep(\calL)$ of representations of a Lie algebroid $\calL$ (over $X$) is equivalent to the category $\mathfrak U(\calL)$\bf{-mod}.
\end{prop}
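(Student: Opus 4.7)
The plan is to reduce this statement to its local, purely algebraic analogue, which has just been recalled for Lie--Rinehart algebras, and then glue using Proposition~\ref{equiv}. Since $\mathfrak U(\calL)$ is defined affine-locally (its sections on an affine open $U\subset X$ are the Rinehart universal enveloping algebra $\mathfrak U(\calL(U))$ of the Lie--Rinehart algebra $\calL(U)$), both categories admit sheafified descriptions whose stalks and affine-local sections are given by the corresponding algebraic objects. So I would construct a pair of functors $\Phi \colon \Rep(\calL)\to \mathfrak U(\calL)\textbf{-mod}$ and $\Psi\colon \mathfrak U(\calL)\textbf{-mod}\to\Rep(\calL)$ by sheafifying the Lie--Rinehart equivalence, and check that they are quasi-inverse.

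First, for the functor $\Phi$: given a representation $(\cM,\rho)$ of $\calL$, on each affine open $U\subset X$ the data of $\rho$ restrict to a representation of the Lie--Rinehart algebra $\calL(U)$ on the $\cO_X(U)$-module $\cM(U)$ (by Proposition~\ref{equiv} and the definition of a representation via the Atiyah algebroid, which says precisely that sections of $\calL$ act as first-order differential operators on sections of $\cM$ whose symbol is the anchor). The recalled Lie--Rinehart equivalence then turns this into a $\mathfrak U(\calL(U))$-module structure on $\cM(U)$. Functoriality of $\mathfrak U$ and of the universality construction guarantees compatibility with restrictions between affine opens, so these assemble into a sheaf of $\mathfrak U(\calL)$-modules. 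Morphisms of representations are affine-locally morphisms of Lie--Rinehart representations, which become morphisms of $\mathfrak U(\calL(U))$-modules, so $\Phi$ is defined on morphisms as well.

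Conversely, for $\Psi$: given a sheaf $\cM$ of left $\mathfrak U(\calL)$-modules, on each affine open $U$ the module $\cM(U)$ is a $\mathfrak U(\calL(U))$-module, hence by the Lie--Rinehart equivalence it carries a representation $\rho_U\colon \calL(U)\to D(\cM(U))$. The compatibility with restrictions again follows from the functoriality of the enveloping algebroid construction, so the $\rho_U$ glue to a morphism of sheaves of Lie algebroids $\rho\colon \calL\to \cD_{\cM}$. The identity $s\cdot f-f\cdot s = a(s)(f)$ inside $\mathfrak U(\calL)$ guarantees that each $\rho(s)$ is a first-order differential operator with symbol $a(s)$, so $\rho$ lands in $\cD_{\cM}$ and makes $\cM$ a coherent $\cO_X$-module (coherence is built into the definition of $\Rep(\calL)$, so one restricts to coherent $\mathfrak U(\calL)$-modules, which is where the equivalence lives).

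Finally, that $\Phi\circ\Psi$ and $\Psi\circ\Phi$ are naturally isomorphic to the respective identities is checked on sections over affine opens, where it reduces to the corresponding statement for Lie--Rinehart algebras already recalled. The main (minor) obstacle is the gluing: verifying that the local Lie--Rinehart equivalences patch consistently under restriction to smaller affine opens. This follows from two ingredients that are both already in place, namely the functoriality of $\mathfrak U(\cdot)$ on morphisms of Lie--Rinehart algebras (so restriction maps intertwine the two module structures) and Proposition~\ref{equiv} (which ensures the affine-local picture is the right one); there are no further surprises, the content being essentially a formal transport of the algebraic equivalence to the sheaf-theoretic setting.
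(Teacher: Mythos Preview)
Your proposal is correct and follows exactly the approach the paper has in mind: the paper gives no formal proof, simply stating that the Lie--Rinehart equivalence ``immediately produces the analogous result for Lie algebroids,'' and you have merely unpacked this sentence by making the affine-local reduction and gluing explicit. Your remark about coherence (that one must restrict $\mathfrak U(\calL)\textbf{-mod}$ to coherent modules for the equivalence to match $\Rep(\calL)$ as defined) is a genuine point the paper glosses over.
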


The category of sheaf of modules over any sheaf of rings has enough injectives, and as a consequence, $\Rep(\calL)$ has enough injectives. 
Alternatively, to prove this claim one can use the fact that  for every $x\in X$ the stalk $\calL_x$
is a $(\Bbbk,\cO_{X,x})$-Lie-Rinehart algebra, whose category has enough injectives, and mimic the proof that the category of $\cO_X$-modules has enough injectives \cite{Hartshorne}.

\subsection{Lie algebroid cohomology}
Given  a representation $(\cM,\rho)$ of a Lie algebroid $\calL$, we denote by $\cM^\calL$ its invariant submodule, i.e., for every open subset $U\subset X$,
$$\cM^\calL(U) = \{ m \in \cM(U) \,\vert\, \rho(\calL) (m) = 0  \}.$$
Note that whenever the anchor of $\calL$ is not trivial, this is not an $\cO_X$-module, but only a $\Bbbk_X$-module, as
$$\rho(s)(fm) = f\rho(s)(m) + (a(s)(f)) \,m.$$

We introduce the left-exact  additive functor
\begin{eqnarray} I^\calL \colon \Rep(\calL) &\to& \kmod \\
\cM &\mapsto & \Gamma(X,\cM^\calL).\end{eqnarray}
Since $ \Rep(\calL)$ has enough injectives, we may consider the right derived functors $R^iI^\calL$.

\begin{thm} Let $\calL$ be a locally free a Lie algebroid over the scheme $X$, and let $\Rep(\calL)$ the category of its representations.
The functors $R^iI^\calL$ are isomorphic to the hypercohomology functors
\begin{equation} \mathbb H^i(\calL;-) \colon  \Rep(\calL) \to  \kmod.\end{equation}
\label{main}\end{thm}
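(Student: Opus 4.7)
$\textbf{Plan.}$ My plan is to reinterpret $I^\calL$ as global $\mathfrak U(\calL)$-linear Hom from the trivial module $\cO_X$, and then combine Rinehart's Koszul resolution of $\cO_X$ with an injective resolution of $\cM$ in $\Rep(\calL)$ to build a double complex whose two associated spectral sequences compute $R^\bullet I^\calL(\cM)$ and $\mathbb H^\bullet(\calL;\cM)$ respectively.

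First I would observe that a section of $\cM^\calL$ over an open $U$ is exactly a $\mathfrak U(\calL)|_U$-linear map $\cO_U\to\cM|_U$, so
$$
I^\calL(\cM)=\Gamma\bigl(X,\cHom_{\mathfrak U(\calL)}(\cO_X,\cM)\bigr)=\Hom_{\mathfrak U(\calL)\text{-}\mathbf{mod}}(\cO_X,\cM).
$$
Then I would import from \cite{Rinehart63} the Koszul resolution $K^\bullet\to\cO_X\to 0$ with $K^p=\mathfrak U(\calL)\otimes_{\cO_X}\Lambda^p\calL$: Rinehart's Poincar\'e--Birkhoff--Witt theorem, combined with local freeness of $\calL$, makes each $K^p$ locally free as a left $\mathfrak U(\calL)$-module, so $\cHom_{\mathfrak U(\calL)}(K^p,-)$ is an exact functor and a direct check yields
$$
\cHom_{\mathfrak U(\calL)}(K^\bullet,\cM)\;\cong\;\bigl(\cM\otimes_{\cO_X}\Lambda^\bullet\calL^*,d_\rho\bigr),
$$
recovering the CE--dR complex.

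Next I would pick an injective resolution $\cM\to\cJ^\bullet$ in $\Rep(\calL)$, form the double complex $\cC^{p,q}=\cHom_{\mathfrak U(\calL)}(K^p,\cJ^q)\cong\cJ^q\otimes_{\cO_X}\Lambda^p\calL^*$, and take global sections. Filtering by $q$ first: injectivity of $\cJ^q$ makes $\cHom_{\mathfrak U(\calL)}(-,\cJ^q)$ exact, so $\cJ^q\otimes\Lambda^\bullet\calL^*$ is a sheaf resolution of $(\cJ^q)^\calL$; granting $\Gamma$-acyclicity of each $\cJ^q\otimes\Lambda^p\calL^*$, the horizontal spectral sequence collapses onto the $q$-axis $I^\calL(\cJ^q)$ and so converges to $R^\bullet I^\calL(\cM)$. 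Filtering by $p$ first: flatness of the locally free $\Lambda^p\calL^*$ makes $\cJ^\bullet\otimes\Lambda^p\calL^*$ a $\Gamma$-acyclic resolution of $\cM\otimes\Lambda^p\calL^*$, so the $E_1$ page is $H^q(X,\cM\otimes\Lambda^p\calL^*)$, and assembling with $d_\rho$ gives $\mathbb H^{p+q}(\calL;\cM)$ in the limit. Equating the two limits yields $R^nI^\calL(\cM)\cong\mathbb H^n(\calL;\cM)$.

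$\textbf{Main obstacle.}$ The delicate point in both spectral sequences is the $\Gamma$-acyclicity of $\cJ^q\otimes_{\cO_X}\Lambda^p\calL^*$. Injective objects of the module category $\Rep(\calL)$ over the sheaf of rings $\mathfrak U(\calL)$ are flabby as sheaves of abelian groups (standard Godement-type argument), but flabbiness is not preserved under tensoring with a locally free sheaf. I would handle this by passing to a finite affine open cover of $X$ trivializing $\calL$: on every member and on all finite intersections---affine by separatedness of $X$---the tensor becomes a finite direct sum of restrictions of $\cJ^q$ and is therefore flabby, so a \v Cech argument kills $H^{>0}$ globally. This is the step where the noetherian and separated hypotheses on $X$ are genuinely used.
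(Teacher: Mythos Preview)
Your strategy is sound and uses the same core ingredients as the paper: Rinehart's Koszul-type resolution $K^\bullet=\mathfrak U(\calL)\otimes_{\cO_X}\Lambda^\bullet\calL\to\cO_X$, the identification $\cHom_{\mathfrak U(\calL)}(K^\bullet,\cM)\cong\cM\otimes\Lambda^\bullet\calL^\ast$, and the $\Gamma$-acyclicity of $\mathcal{J}^q\otimes\Lambda^p\calL^\ast$ for $\mathcal{J}^q$ injective in $\Rep(\calL)$. The paper packages these via the universal $\delta$-functor criterion (it shows $\mathbb H^0(\calL;-)\simeq I^\calL$ and then $\mathbb H^i(\calL;\cI)=0$ for injective $\cI$ by applying $\Hom_{\mathfrak U(\calL)}(-,\cI)$ to the Rinehart resolution), whereas you run a double-complex comparison; the paper in fact mentions essentially your organisation as an alternative proof in a later remark via resolvent functors. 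So the two are close reformulations rather than genuinely different arguments.

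There is, however, a real gap in your handling of the ``main obstacle''. Your \v Cech argument only shows that $\mathcal{J}^q\otimes\Lambda^p\calL^\ast$ is acyclic on each member and on each finite intersection of a trivializing affine cover; by Leray's criterion this yields that \v Cech cohomology for that cover computes sheaf cohomology, but it does \emph{not} show that the \v Cech cohomology itself vanishes in positive degree. The \v Cech differential is twisted by the transition functions of $\Lambda^p\calL^\ast$, so knowing that each term is isomorphic to $\mathcal{J}^q(U_I)^{\oplus r}$ does not let you reduce to the (exact) \v Cech complex of the flasque sheaf $\mathcal{J}^q$. A clean fix, which is what the paper's parenthetical ``injective'' remark is pointing at, is to observe that $\mathcal{J}^q$ is already injective as an $\cO_X$-module: the forgetful functor from $\mathfrak U(\calL)$-modules to $\cO_X$-modules has the exact left adjoint $\mathfrak U(\calL)\otimes_{\cO_X}-$ (exact because $\mathfrak U(\calL)$ is flat over $\cO_X$ by the Poincar\'e--Birkhoff--Witt filtration, $\calL$ being locally free), and a functor with an exact left adjoint preserves injectives. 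Then $\mathcal{J}^q\otimes_{\cO_X}\Lambda^p\calL^\ast\cong\cHom_{\cO_X}(\Lambda^p\calL,\mathcal{J}^q)$ is again injective as an $\cO_X$-module (its left adjoint $\Lambda^p\calL\otimes_{\cO_X}-$ is exact), hence flasque, hence $\Gamma$-acyclic. With this in place both of your spectral sequences collapse as claimed and the proof goes through.
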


\begin{proof} We start by noting that the functors $\mathbb H^i(\calL;-)$, in view of the standard properties of the hypercohomology functors (i.e., the association with every exact sequence of representations of $\calL$ of a connecting morphism satisfying the usual functorial properties), make up an exact $\delta$-functor.\footnote{We recall that a $\delta$-functor between two abelian categories $\mathfrak A$ and $\mathfrak B$ is a collection of functors $\{T^i\colon\mathfrak A\to\mathfrak B, i\ge 0\}$, and for every exact sequence $0 \to A \to B \to C \to C\to 0$ in $\mathfrak A$,   morphisms $\delta_i\colon T^i(C) \to T^{i+1}(A)$ satisfying some properties: naturality with respect to morphisms of exact sequences, and exactness of the sequences
$$ ... \to T^{i-1}(C) \xrightarrow{\delta_{i-1}} T^{i}(A) \to  T^{i}(B) \to  T^{i}(C)  \xrightarrow{\delta_{i}} T^{i+1}(A) \to \dots $$
For a complete definition, and the property of $\delta$-functors we are going to use, see \cite{Tohoku,Tennison,Hartshorne}.}
To get the desired isomorphism, we need to show that $\mathbb H^0(\calL;-)\simeq I^\calL $, and moreover, that the  $\delta$-functor is {\em effaceable,} which is tantamount to saying that $\mathbb H^i(\calL;\cI)=0$
for $i>0$ whenever $\cI$ is an injective object in  $ \Rep(\calL) $. The first property will be proved in the next Lemma. For the second, we consider the homology complex of $\Bbbk_X$-modules $\{\cC^i\}_{i\ge 0}$
$$\cC^i = \mathfrak U(\calL) \otimes_{\cO_X}\Lambda^i\calL$$
with the differential
\begin{multline}\label{homcomplex}  \partial ( u \otimes x_1\wedge \dots \wedge x_i )   = 
\sum_{j=1}^i (-1)^{j-1} ux_j \otimes x_1\wedge \dots \wedge \hat x_j \wedge \dots \wedge x_i  \\
+\sum_{1\le j<k \le n} (-1)^{j+k} u\otimes [x_j,x_k] \wedge  x_1\wedge \dots \wedge \hat x_j \wedge \dots \wedge \hat x_k \wedge \dots \wedge x_i .
\end{multline}
Moreover, one  has the  (surjective) augmentation morphism $\varepsilon\colon \cC^0 =  \mathfrak U(\calL) \to \cO_X$, cf.~Section \ref{univalgebroid}.
We note that, for every $x\in X$, the stalk $\calL_x$ is a   $(\Bbbk,\cO_{X,x})$-Lie-Rinehart algebra, projective  over $\cO_{X,x}$, and that the 
complex $\{\cC^\bullet\}$  induces a complex $$\cC^\bullet_x =  \mathfrak U(\calL_x) \otimes_{\cO_{X,x}}\Lambda^i\calL_x$$
which is a projective resolution of the local ring $\cO_{X,x}$ \cite{Rinehart63}. Therefore, $\{\cC^\bullet\}$ is a  resolution
of $ \cO_X$ (as a $ \mathfrak U(\calL)$-module; note that the differential $\partial$ in \eqref{homcomplex} is $ \mathfrak U(\calL)$-linear with respect to the argument $u$).  If we apply the functor $\Hom_{\mathfrak U(\calL)}(-,\cI)$ to $\cC^\bullet$, 
the cohomology complex we obtain is exact as $\cI$ is injective, and on the other hand, since
$$\mathcal H\!om_{\mathfrak U(\calL)}( \mathfrak U(\calL) \otimes_{\cO_X}\Lambda^i\calL,\cI) \simeq \cI\otimes_{\cO_X}\Lambda^\bullet\calL^\ast,$$
 it coincides with the complex
$\{\Gamma(X,\cM\otimes_{\cO_X}\Lambda^\bullet\calL^\ast),d_\rho\}$, so that $\mathbb H^i(\calL;\cI)=0$
for $i>0$ (note that the sheaves $\cI\otimes_{\cO_X}\Lambda^\bullet\calL^\ast$ are injective $\mathfrak U(\calL)$-modules).
\end{proof}

\begin{lemma} Given a representation $(\cM,\rho)$ of $\calL$, there is an isomorphism $$\mathbb H^0(\calL;\cM) \simeq\Gamma(X,\cM^\calL) = I^\calL(\cM) .$$
\end{lemma}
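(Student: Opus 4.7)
The plan is to identify $\mathbb{H}^0(\calL;\cM)$ via the standard hyper-cohomology spectral sequence and then compute the relevant sheaf in degree zero directly from the formula for $d_\rho$.

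First I would invoke the hyper-cohomology spectral sequence for the complex $\cF^\bullet = (\cM\otimes_{\cO_X}\Lambda^\bullet\calL^\ast, d_\rho)$, namely
\[
E_2^{p,q} = H^p\bigl(X,\mathcal{H}^q(\cF^\bullet)\bigr) \;\Longrightarrow\; \mathbb{H}^{p+q}(X,\cF^\bullet).
\]
Since $\cF^\bullet$ is concentrated in non-negative degrees, the only $E_2$ term with total degree $0$ is $E_2^{0,0}=\Gamma(X,\mathcal{H}^0(\cF^\bullet))$, and there is no room for differentials landing in or leaving this slot. Hence there is a canonical isomorphism
\[
\mathbb{H}^0(\calL;\cM) \;\simeq\; \Gamma\bigl(X,\mathcal{H}^0(\cF^\bullet)\bigr).
\]
(Equivalently, one may argue directly: if $\cI^\bullet$ is an injective resolution of $\cF^\bullet$ in the category of complexes of $\cO_X$-modules, then $\cI^0 \to \cI^1$ has kernel quasi-isomorphic to the sheaf $\mathcal{H}^0(\cF^\bullet)$, and taking $\Gamma$ recovers $H^0$.)

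Next I would compute $\mathcal{H}^0(\cF^\bullet)$ directly from the definition \eqref{diffoid}. In degree $0$ the differential sends a section $m$ of $\cM$ to the element of $\cM\otimes_{\cO_X}\calL^\ast$ defined by $(d_\rho m)(s) = \rho(s)(m)$. Thus the kernel sheaf $\mathcal{H}^0(\cF^\bullet)$ is precisely the subsheaf of $\cM$ consisting, on each open $U$, of those $m\in\cM(U)$ annihilated by every $\rho(s)$ with $s\in\calL(U)$; that is, $\mathcal{H}^0(\cF^\bullet) = \cM^\calL$.

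Combining the two steps,
\[
\mathbb{H}^0(\calL;\cM) \;\simeq\; \Gamma(X,\cM^\calL) \;=\; I^\calL(\cM),
\]
which is the desired isomorphism. There is no serious obstacle here: the only point requiring some care is that $\cM^\calL$ is only a sheaf of $\Bbbk_X$-modules (not of $\cO_X$-modules), as already noted in the text, so the global section functor on the right-hand side is taken in the category of sheaves of $\Bbbk_X$-modules; but this is compatible with the target $\kmod$ of the functor $I^\calL$, and is also the natural setting in which the hyper-cohomology spectral sequence of $\cF^\bullet$ computes $\mathbb{H}^\bullet(\calL;\cM)$.
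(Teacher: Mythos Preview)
Your proof is correct and follows essentially the same approach as the paper: both identify $\mathbb H^0(\calL;\cM)$ with $\Gamma(X,\mathcal H^0(\cF^\bullet))$ and then compute $\mathcal H^0(\cF^\bullet)=\ker(d_\rho\colon\cM\to\cM\otimes\calL^\ast)=\cM^\calL$ directly from the formula for $d_\rho$. The only cosmetic difference is packaging: the paper argues directly from the definition of hypercohomology via an injective resolution and left-exactness of $\Gamma$ (which is exactly your parenthetical alternative), whereas your main route invokes the hypercohomology spectral sequence to isolate the $E_2^{0,0}$ term.
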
 
\begin{proof} One has
$$ \mathbb H^0(\calL;\cM) = \ker \delta_0 \colon \Gamma(X,\cI^0) \to  \Gamma(X,\cI^1),$$
where $\{\cI^\bullet,\delta_\bullet\}$ is a complex of injective $\Bbbk_X$-modules quasi-isomorphic to $(\cM\otimes_{\cO_X}\Lambda^\bullet\calL^\ast,d_\rho)$. Since $\Gamma$ is left-exact, and the two complexes are quasi-isomorphic, we have
\begin{multline} \ker \delta_0 \colon \Gamma(X,\cI^0) \to  \Gamma(X,\cI^1)   \simeq  \Gamma(X,\ker \delta_0 \colon \cI^0\to \cI^1) \\ \simeq 
\Gamma(X,\ker d_\rho \colon \cM \to   \cM\otimes_{\cO_X} \calL^\ast) = \Gamma(X,\cM^\calL) .\end{multline}
\end{proof} 

\subsection{Lie algebroid cohomology as Ext groups}
As for Lie algebras,  Lie algebroid cohomology can be written in terms of Ext groups. 
If $(L,a)$ is a $(\Bbbk,A)$-Lie-Rinehart algebra,  
the algebra $A$ can be made into a representation of $L$ by defining $\rho\colon L \to D_A \simeq A \oplus \Der_\Bbbk(A)$ as
$$\rho(x)(\alpha) = a(x)(\alpha).$$
Then one easily checks that, if $M$ is a reprentation of $L$, 
$$\Hom_L(A,M) \simeq M^L,$$
where the isomorphism maps $f \in \Hom_L(A,M)$ to $f(1)$. On the other hand, 
the equivalence between the categories of representations of $L$ and of $\mathfrak U(L)$-modules
implies 
$$\Hom_{\mathfrak U(L)}(A,M) \simeq \Hom_L(A,M) \simeq  M^L$$
where $A$ is a $\mathfrak U(L)$-module via the augmentation morphism $\mathfrak U(L)\to A$.
If $\calL$ is a Lie algebroid on a scheme $X$, and $\cM$ is a representation of $\calL$, 
this yields an isomorphism of sheaves of $\Bbbk_X$-modules
$$\cHom_{\mathfrak U(\calL)}(\cO_X,\cM) \simeq \cM^{\calL}$$
and taking global sections
$$\Hom_{\mathfrak U(\calL)}(\cO_X,\cM) \simeq   I^{\calL} (\cM),$$ 
so that one has:
\begin{prop} Let $\calL$ be a locally free a Lie algebroid over $X$. The functors    $ \Ext^i_{\mathfrak U(\calL)}(\cO_X,-)$  and $\mathbb H^i(\calL;-)$ are isomorphic as functors $\Rep(\calL)\to \Bbbk$\bf{-mod}.
 \end{prop}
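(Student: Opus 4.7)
The plan is to recognize that both sides are right derived functors of the same left-exact functor $\Rep(\calL)\to\Bbbk\text{-}\mathbf{mod}$, and to combine this observation with Theorem \ref{main}.

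First, I would take the content of the displayed isomorphisms immediately preceding the statement as already established: there is a natural isomorphism of functors
\[
\Hom_{\mathfrak U(\calL)}(\cO_X,-)\;\simeq\;I^{\calL}(-)
\]
from $\Rep(\calL)$ (equivalently, from the category $\mathfrak U(\calL)\text{-}\mathbf{mod}$) to $\Bbbk\text{-}\mathbf{mod}$. Both functors are additive and left exact, and the source category has enough injectives, so their right derived functors are defined and are naturally isomorphic. By definition, $R^i\Hom_{\mathfrak U(\calL)}(\cO_X,-)=\Ext^i_{\mathfrak U(\calL)}(\cO_X,-)$, so one gets a natural isomorphism
\[
\Ext^i_{\mathfrak U(\calL)}(\cO_X,-)\;\simeq\;R^iI^{\calL}.
\]
Then Theorem \ref{main} identifies $R^iI^{\calL}$ with $\mathbb H^i(\calL;-)$, which yields the desired conclusion.

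The only point that deserves a moment of care is to check that passing from $\Rep(\calL)$ to $\mathfrak U(\calL)\text{-}\mathbf{mod}$ via the equivalence of Proposition~3.1 (and the subsequent remark stating both categories have enough injectives) does not affect the computation of derived functors: injective objects correspond under the equivalence, and a natural isomorphism of left exact functors extends uniquely to a natural isomorphism of their derived sequences. There is no genuine obstacle here; the result is essentially a formal packaging of the identification $\Hom_{\mathfrak U(\calL)}(\cO_X,-)\simeq I^{\calL}$ already given, together with Theorem \ref{main}.
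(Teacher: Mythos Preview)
Your proof is correct and is exactly the argument the paper intends: the proposition is stated immediately after the isomorphism $\Hom_{\mathfrak U(\calL)}(\cO_X,-)\simeq I^{\calL}$ with the words ``so that one has,'' and your proposal simply spells out the derived-functor consequence together with Theorem~\ref{main}. There is nothing to add.
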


\bigskip
\section{A Hochshild-Serre spectral sequence}\label{HSss}

Let
\begin{equation}\label{extens}
0 \to \cK \to \calL \to \cQ \to  0
\end{equation}
be an exact sequence of locally free Lie algebroids over $X$. 
We shall associate with such an extension a spectral sequence analogous to the Hochschild-Serre spectral sequence \cite{Hoch-Serre53}; this is actually a Grothendieck spectral sequence associated with the right derived functors of the composition of two left-exact functors. This   spectral sequence was  already introduced in \cite{BMRT} in a different way. We shall compare the two definitions later on in this section. The new viewpoint will allow us to study this spectral sequence in an easier way, gaining a better grasp of it.

\subsection{The Hochshild-Serre spectral sequence as a Grothedieck spectral sequence}
As we already noticed, $\cK$ is a sheaf of $\cO_X$-Lie algebras, i.e.,
it has a vanishing anchor. Thus, if $\cM$ is a representation of $\calL$, 
the $\cK$-invariant submodule  $\cM^\cK$ is an  $\cO_X$-module, and moreover,
it is a representation of $\cQ$. One has a commutative diagram of functors
\begin{equation}\label{functors}
\xymatrix{
\Rep(\calL) \ar[r]^{(-)^\cK} \ar[rd]_{I^\calL} & \Rep(\cQ) \ar[d]^{I^\cQ} \\
& \kmod}\  .
\end{equation}
Then we know \cite{Tohoku} that, if $(-)^\cK$  maps injective objects of $\Rep(\calL)$ to
$I^\cQ$-acyclic objects of $\Rep(\cQ)$,
 for every representation $\cM$ of $\calL$ there is a spectral sequence converging to $R^\bullet I^\calL(\cM)$, whose second 
page is $E_2^{pq} = R^pI^\cQ(R^q\cM^\cK)$.

So we need to prove:

\begin{lemma}
If $\cI$ is an injective object of $\Rep(\calL)$, then $R^pI^\cQ(\cI^\cK)=0$ for $p>0$.
\end{lemma}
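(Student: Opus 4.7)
The plan is to reduce the lemma to the claim that $\cI^\cK$ is itself an injective object of $\Rep(\cQ)$. Once this is established, the vanishing follows immediately: by Theorem~\ref{main} we have $R^p I^\cQ \simeq \mathbb H^p(\cQ;-)$, and the derived functors of a left-exact functor annihilate injective objects in positive degrees.

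To prove that $(-)^\cK \colon \Rep(\calL)\to \Rep(\cQ)$ preserves injectives, I would exhibit it as a right adjoint of an exact functor. The natural candidate is the pullback $\pi^\ast \colon \Rep(\cQ)\to \Rep(\calL)$ along the projection $\pi\colon \calL\to \cQ$ in \eqref{extens}: a $\cQ$-representation $(\cN,\beta)$ becomes an $\calL$-representation with action $\beta\circ \pi$. Since $\pi^\ast$ leaves the underlying $\cO_X$-module unchanged, it is visibly exact.

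The heart of the argument is the adjunction
$$\Hom_{\Rep(\calL)}(\pi^\ast \cN, \cM) \cong \Hom_{\Rep(\cQ)}(\cN, \cM^\cK).$$
The key observation is that on $\pi^\ast \cN$ the kernel $\cK$ acts trivially, because $\pi(s)=0$ for every section $s$ of $\cK$. Thus any $\calL$-equivariant map $f\colon \pi^\ast \cN\to \cM$ is forced to land in $\cM^\cK$, yielding a morphism of $\cQ$-representations $\cN\to \cM^\cK$; conversely, any such morphism composed with $\cM^\cK\hookrightarrow \cM$ is $\calL$-equivariant because the $\calL$-action on $\cM^\cK$ factors through $\cQ$. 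The standard categorical principle that right adjoints of exact functors preserve injectives then gives that $\cI^\cK$ is injective in $\Rep(\cQ)$, and we are done.

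The main preliminary to verify is that $\cM^\cK$ is genuinely a $\cQ$-representation: one needs $\cK$ to be an ideal of $\calL$ (which is immediate, since $\pi([s,t])=[\pi(s),\pi(t)]=0$ whenever $s$ is a section of $\cK$) so that $\cM^\cK$ is $\calL$-stable, after which the triviality of the $\cK$-action on $\cM^\cK$ allows descent to a well-defined $\cQ$-action. This bookkeeping---together with the routine check that the bijection above respects $\cO_X$-linearity and Lie-bracket compatibility---is essentially the only obstacle; the conceptual content sits entirely in the triviality of the $\cK$-action on $\pi^\ast \cN$.
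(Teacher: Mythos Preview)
Your proof is correct and follows essentially the same route as the paper: the paper likewise proves the stronger statement that $(-)^{\cK}$ sends injectives to injectives, by observing that $(-)^{\cK}$ is right adjoint to the exact forgetful (i.e.\ pullback) functor $\Rep(\cQ)\to\Rep(\calL)$. You have simply written out in detail the adjunction that the paper states in a single line.
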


\begin{proof} Actually one proves a stronger result, namely, that $(-)^\cK$  maps injective objects 
of $\operatorname{Rep}({\mathscr L})$ to injective objects $\operatorname{Rep}({\mathscr Q})$. This follows from the
fact that the functor  $(-)^\cK$ is right adjoint to the natural forgetful functor  $\operatorname{Rep}({\mathscr Q})\to \operatorname{Rep}({\mathscr L})$, which is exact.
 \end{proof} 
  
We shall denote the sheaves $R^q\cM^\cK$ as $\mathcal H^q(\cK;\cM)$.
 For every $q$, $\mathcal H^q(\cK;\cM)$ is a coherent $\cO_X$-module, and is a representation of $\cQ$.
 Note that $\mathcal H^0(\cK;\cM)\simeq \cM^{\cK}$.

Se, rephrasing our claim about the spectral sequence, we have:

\begin{thm} Let $\calL$ be a locally free a Lie algebroid over $X$. For every representation $\cM$ of $\calL$ there is a spectral sequence $E$ converging to  $\mathbb H^{\bullet}(\calL;\cM)$, whose second page is 
\begin{equation}\label{2ndpageAlg} E_2^{pq}=\mathbb H^p(\cQ;\mathcal H^q(\cK;\cM)).\end{equation}
\label{ss1}
\end{thm}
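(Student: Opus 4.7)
The strategy is to invoke Grothendieck's composite-functor spectral sequence for the factorization $I^\calL \simeq I^\cQ \circ (-)^\cK$ displayed in diagram \eqref{functors}, and then to translate each resulting derived functor into Lie algebroid cohomology via Theorem \ref{main}. Since the preceding lemma has already supplied the nontrivial acyclicity hypothesis, the proof reduces to a verification of the remaining formal hypotheses and to a term-by-term identification.

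Concretely, I would first check the three conditions required to apply Grothendieck's theorem. Both functors $(-)^\cK$ and $I^\cQ$ are additive and left-exact: the first because it is the kernel of the $\cK$-action, the second by construction. Both $\Rep(\calL)$ and $\Rep(\cQ)$ have enough injectives, as recorded in the paragraph following Proposition~3.1. The crucial acyclicity condition, that $(-)^\cK$ sends injectives in $\Rep(\calL)$ to $I^\cQ$-acyclic objects of $\Rep(\cQ)$, is supplied by the preceding lemma (in fact in the stronger form that injectives go to injectives). Finally, the commutativity of \eqref{functors} reduces to the elementary identity $(\cM^\cK)^\cQ = \cM^\calL$ of subsheaves of $\cM$: a local section is killed by the whole of $\calL$ iff it is killed by $\cK$ and, modulo $\cK$, by a lift of every local section of $\cQ$. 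Granted these inputs, Grothendieck's theorem produces a convergent first-quadrant spectral sequence
$$E_2^{pq} = R^p I^\cQ\bigl( R^q (-)^\cK(\cM)\bigr) \Longrightarrow R^{p+q} I^\calL(\cM).$$

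The final step is just a dictionary. Applying Theorem \ref{main} to $\calL$ and to $\cQ$ respectively gives $R^{p+q} I^\calL \simeq \mathbb H^{p+q}(\calL;-)$ and $R^p I^\cQ \simeq \mathbb H^p(\cQ;-)$, while $R^q(-)^\cK(\cM)$ is by the definition stated just before the theorem exactly the sheaf $\mathcal H^q(\cK;\cM)$, which inherits a $\cQ$-representation structure because $(-)^\cK$ and its right-derived functors are valued in $\Rep(\cQ)$. Substituting these identifications yields \eqref{2ndpageAlg} and the convergence to $\mathbb H^\bullet(\calL;\cM)$, as required. There is no real obstacle here beyond invoking the machinery correctly; the only substantive ingredient is the preservation-of-injectives lemma, which is already available.
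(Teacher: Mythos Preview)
Your proposal is correct and follows essentially the same approach as the paper: the theorem is obtained by applying Grothendieck's composite-functor spectral sequence to the factorization in diagram \eqref{functors}, with the acyclicity hypothesis supplied by the preceding lemma, and then identifying the derived functors with Lie algebroid hypercohomology via Theorem \ref{main}. You have in fact spelled out the verification of the formal hypotheses (left-exactness, enough injectives, commutativity of the diagram) more explicitly than the paper, which simply records the theorem as a ``rephrasing'' once the lemma is in hand.
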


It may be useful to record the explicit form of the five-term sequence of this spectral sequence:

\begin{multline}
0 \to \mathbb H^1(\cQ;\cM^{\cK}) \to \mathbb H^1(\calL;\cM) \to  \\  \mathbb H^0(\cQ;\mathcal H^1(\cK;\cM)) \to \mathbb H^2(\cQ; \cM^{\cK}) \to \mathbb H^2(\calL;\cM) \,.
\end{multline} 

\subsection{Another approach to the spectral sequence} We define a filtration $\{\cF^q_p\}$ of the complex $\cM^\bullet=\cM\otimes_{\cO_X}\Lambda^\bullet\calL^\ast$ by defining $\cF^q_p$ as the subsheaf of $\cM\otimes_{\cO_X}\Lambda^q\calL^\ast$ whose sections are annihilated by the wedge product of $q-p+1$ sections of $\cK$. The graded object of this filtration is  
\begin{equation}\label{graded}
\operatorname{gr}_p \cM^{q} =
\cF^{q}_{p}/\cF^{q}_{p+1} \simeq\cM\otimes_{\cO_X} \Lambda^p\cQ^\ast\otimes_{\cO_X}\Lambda^{q-p}\cK^\ast.\end{equation}
According to the mechanism shown in \cite[Ch.~0, 13.6.4]{EGA3-I}, the filtration $\{\cF^q_p\}$ induces
a filtration of the complex that computes the hypercohomology of the complex $\cM\otimes_{\cO_X}\Lambda^\bullet\calL^\ast$, thus giving rise to a spectral sequence $\widetilde E$ which converges to $\mathbb H^\bullet(\calL;\cM)$. This spectral sequence is the Lie algebroid analogue of the Hochschild-Serre spectral sequence as it was originally defined \cite{Hoch-Serre53}, and it is therefore interesting to compare it with the spectral sequence we have previously introduced. 

Let us briefly sketch this construction.   By standard homological techniques (see e.g.\ \cite{Tennison} --- basically, the horseshoe Lemma),  one can introduce injective resolutions $\mathcal C^{q,\bullet }$ of $\cM^q=\cM\otimes_{\cO_X}\Lambda^q\calL^\ast$, and $F_p\mathcal C^{q,\bullet }$ of $\cF_p^q$, such that $F_p\mathcal C^{q,\bullet }$ is a filtration of $\mathcal C^{q,\bullet }$, and $\gr_p \mathcal C^{q,\bullet}= F_p\mathcal C^{q,\bullet }/ F_{p+1}\mathcal C^{q,\bullet }$ is an injective resolution of $\gr_p \cM^q$.
We consider the total complex 
\[
T^k = \bigoplus_{p+q=k} \Gamma(X,\mathcal C^{p,q})
\]
whose cohomology is the hypercohomology of $\Omega_\cA^\bullet$. Its descending filtration is defined by
\begin{equation}\label{ss2}     
F_\ell T^k = \bigoplus_{p+q=k} \Gamma(X, F_\ell \mathcal C^{p,q}).
\end{equation}
       
\begin{lemma} \label{quotient} 
For all $\ell$, $p$, $q$ there is  an isomorphism
 \[
 \Gamma(X,F_\ell \mathcal C^{p,q}) /
 \Gamma(X,F_{\ell +1}\mathcal C^{p,q})\simeq \Gamma(X,\operatorname{gr}_\ell \mathcal C^{p,q}).
 \]
 \end{lemma}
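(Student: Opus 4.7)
The plan is to reduce the claim to showing that the tautological short exact sequence of sheaves
\begin{equation*}
0 \to F_{\ell+1}\mathcal C^{p,q} \to F_{\ell}\mathcal C^{p,q} \to \operatorname{gr}_{\ell}\mathcal C^{p,q} \to 0 \qquad (\ast)
\end{equation*}
stays exact after applying the global-section functor $\Gamma(X,-)$; once this is done, the first isomorphism theorem gives the required identification.

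The key input is that the horseshoe-lemma construction sketched just before the lemma actually produces a filtration whose terms are all injective sheaves on $X$. By construction each graded piece $\operatorname{gr}_\ell \mathcal C^{p,q}$ is injective (because $\gr_\ell \mathcal C^{p,\bullet}$ is an injective resolution of $\gr_\ell \cM^p$). Since the filtration on $\cM\otimes_{\cO_X}\Lambda^q\calL^\ast$ is finite (its length is bounded by $\rk\calL$), I would argue by descending induction on $\ell$: for $\ell$ large enough $F_\ell\mathcal C^{p,q}=0$, and at each step $F_\ell\mathcal C^{p,q}$ is an extension of the injective $\gr_\ell\mathcal C^{p,q}$ by the injective $F_{\ell+1}\mathcal C^{p,q}$, hence is injective itself.

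With $F_{\ell+1}\mathcal C^{p,q}$ injective, the identity map $F_{\ell+1}\mathcal C^{p,q}\to F_{\ell+1}\mathcal C^{p,q}$ extends along the monomorphism $F_{\ell+1}\mathcal C^{p,q}\hookrightarrow F_{\ell}\mathcal C^{p,q}$, producing a retraction that splits $(\ast)$. Any additive functor, in particular $\Gamma(X,-)$, preserves split short exact sequences, which yields
\begin{equation*}
0 \to \Gamma(X,F_{\ell+1}\mathcal C^{p,q}) \to \Gamma(X,F_{\ell}\mathcal C^{p,q}) \to \Gamma(X,\operatorname{gr}_{\ell}\mathcal C^{p,q}) \to 0,
\end{equation*}
and the quotient on the left is canonically isomorphic to the right-hand term, as claimed.

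The only real obstacle is verifying that the filtration pieces themselves (and not only their graded quotients) are injective; the descending induction above is precisely what dispatches this. If one preferred to avoid that step, an alternative is to invoke the fact that injective $\cO_X$-modules on a noetherian separated scheme are flabby and hence $\Gamma$-acyclic: the injectivity of $F_{\ell+1}\mathcal C^{p,q}$ then gives $H^1(X,F_{\ell+1}\mathcal C^{p,q})=0$, so the long exact sequence of cohomology associated with $(\ast)$ collapses to the same short exact sequence of global sections, with the same conclusion.
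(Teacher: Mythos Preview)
Your proof is correct, and in fact your ``alternative'' at the end is precisely the paper's argument: apply $\Gamma$ to the short exact sequence $(\ast)$, use the long exact sequence in sheaf cohomology, and invoke $H^1(X,F_{\ell+1}\mathcal C^{p,q})=0$ from injectivity of $F_{\ell+1}\mathcal C^{p,q}$.

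Two minor remarks. First, your descending induction to establish injectivity of the $F_\ell\mathcal C^{p,q}$ is unnecessary here: the paper's setup immediately before the lemma already asserts that each $F_p\mathcal C^{q,\bullet}$ is an \emph{injective} resolution of $\cF_p^q$ (this is part of what the horseshoe construction produces), so the filtration pieces are injective by hypothesis, not merely their graded quotients. Second, your main route via splitting is a perfectly valid variant --- once $F_{\ell+1}\mathcal C^{p,q}$ is injective the sequence $(\ast)$ splits and $\Gamma$ preserves that --- but it yields nothing beyond what the $H^1$-vanishing argument already gives, and the latter is slightly more direct.
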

 \begin{proof} As $F_p \mathcal C^{q,\bullet }/ F_{p+1}\mathcal C^{q,\bullet } \simeq 
 \gr_\ell \mathcal C^{p,q}$ one has the exact sequences
\[ 
 0 \to   \Gamma(X, F_{\ell +1}\mathcal C^{p,q} ) \to \Gamma(X, F_\ell \mathcal C^{p,q}) \to \Gamma(X, \gr_\ell\mathcal C^{p,q}) \to 
H^1(X, F_{\ell +1} \mathcal C^{p,q} )=0.
\]
\end{proof}

 As a consequence, the zeroth  term of the spectral sequence given by the  filtration $F_\ell T^k$  is
 \[
\widetilde E_0^{\ell k}  = \bigoplus_{p+q=k+\ell} \Gamma(X, \gr_\ell\mathcal C^{p,q}) .
 \]
 Recalling that the differential $d_0 \colon \widetilde E_0^{\ell k} \to \widetilde E_0^{\ell,k+1}$ is induced by the differential of the complex $\cM^\bullet$, we obtain 
 \[
\widetilde E_1^{\ell k} \simeq \mathbb H^k(X,\cF_\ell^\bullet / \cF_{\ell + 1}^\bullet).
 \]
By plugging in the isomorphism  \eqref{graded},  we have
\begin{equation}\label{1stpageAlg}\widetilde E_1^{pq} \simeq \mathbb H^q(\cK;\cM\otimes_{\cO_X}\Lambda^p\cQ^\ast).\end{equation}
Note that, since $\cQ$ is a  $\cK$-module, the sheaves $\Lambda^p\cQ^\ast$ are $\cK$-modules as well.

\begin{thm} Let $E$ be the spectral sequence of Theorem \ref{ss1}, and let $\widetilde E$ be the spectral sequence associated with the filtration \eqref{ss2}. The   spectral sequences $E$ and $\widetilde E$ are  isomorphic.  \label{ssai}
\end{thm}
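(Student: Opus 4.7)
The plan is to identify the two spectral sequences via a common $E_2$ page and a compatible filtration on the abutment $\mathbb H^\bullet(\calL;\cM)$.  Both $E$ and $\widetilde E$ are bounded (as $\calL$ has finite rank and \eqref{extens} is a finite extension), so by the standard comparison theorem it will suffice to produce a morphism of spectral sequences that is an isomorphism on $E_2$ and that respects the filtration on the common abutment.

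The core calculation is of $\widetilde E_2$.  From \eqref{1stpageAlg}, one already has $\widetilde E_1^{pq}\simeq\mathbb H^q(\cK;\cM\otimes_{\cO_X}\Lambda^p\cQ^\ast)$, and $d_1$ is the connecting homomorphism of the short exact sequence $0\to\cF_{p+1}/\cF_{p+2}\to\cF_p/\cF_{p+2}\to\cF_p/\cF_{p+1}\to 0$.  Unwinding formula \eqref{diffoid} with one ``$\cQ$-slot'' identifies this as the Chevalley--Eilenberg differential of $\cQ$ applied to $\cK$-cohomology.  Two ingredients are essential here: the exactness of \eqref{extens} forces $[\cK,\calL]\subseteq\cK$, so $\cK$ acts trivially on $\cQ$ and on $\Lambda^p\cQ^\ast$; and the projection formula for the locally free sheaf $\Lambda^p\cQ^\ast$ then gives $\mathcal H^q(\cK;\cM\otimes\Lambda^p\cQ^\ast)\simeq\mathcal H^q(\cK;\cM)\otimes\Lambda^p\cQ^\ast$.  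Taking $d_1$-cohomology yields $\widetilde E_2^{pq}\simeq\mathbb H^p(\cQ;\mathcal H^q(\cK;\cM))=E_2^{pq}$.

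The natural morphism of spectral sequences realizing this identification comes from an injective resolution $\cM\to\cI^\bullet$ in $\Rep(\calL)$: by the lemma preceding Theorem \ref{ss1}, $(\cI^\bullet)^\cK$ is a complex of injectives in $\Rep(\cQ)$, and a local splitting of \eqref{extens} induces a comparison between the bicomplex $\Gamma(X,(\cI^q)^\cK\otimes_{\cO_X}\Lambda^p\cQ^\ast)$ (whose $p$-filtration yields $E$) and a Cartan--Eilenberg resolution of $\Gamma(X,\cI^\bullet\otimes_{\cO_X}\Lambda^\bullet\calL^\ast)$ (whose $\cF_\bullet$-filtration yields $\widetilde E$).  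The main obstacle is the global-to-sheaf passage in the $\widetilde E_2$ computation, since $\mathbb H^q(\cK;-)$ appearing on $\widetilde E_1$ is global hypercohomology while $\mathcal H^q(\cK;-)$ appearing in $E_2$ is the sheaf derived functor; bridging the two requires controlling the local-to-global spectral sequence for $\cK$ on $\calL$-injective objects, using crucially that $(\cI^q)^\cK$ is injective in $\Rep(\cQ)$, hence $I^\cQ$-acyclic, so that the $E_1$-terms of that auxiliary spectral sequence have the expected vanishing.
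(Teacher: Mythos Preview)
Your computation of $\widetilde E_2$ contains a real gap that you partly acknowledge but do not close. The page $\widetilde E_1^{pq}\simeq\mathbb H^q(\cK;\cM\otimes_{\cO_X}\Lambda^p\cQ^\ast)$ is \emph{global} hypercohomology, whereas the projection formula you invoke, $\mathcal H^q(\cK;\cM\otimes\Lambda^p\cQ^\ast)\simeq\mathcal H^q(\cK;\cM)\otimes\Lambda^p\cQ^\ast$, is a statement about the \emph{sheaf} derived functors. Even granting a global analogue, taking $d_1$-cohomology of $\widetilde E_1^{\bullet q}$ produces the cohomology of a complex of the form $\Gamma(X,\,(-)\otimes\Lambda^\bullet\cQ^\ast)$ with the Chevalley--Eilenberg differential, and that is \emph{not} the hypercohomology $\mathbb H^p(\cQ;\mathcal H^q(\cK;\cM))$ unless the coefficient sheaves happen to be $\Gamma$-acyclic. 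Your last paragraph gestures at repairing this via the local-to-global spectral sequence for $\cK$, but that would require collapsing a third spectral sequence uniformly in $p$ and compatibly with $d_1$; nothing you have written carries this out, and the $\calL$-injectivity of $\cI^q$ does not by itself force the sheaves $\mathcal H^q(\cK;\cM)$ for $q>0$ to have any acyclicity. Likewise, your assertion that the $p$-filtration of the bicomplex $\Gamma(X,(\cI^q)^\cK\otimes\Lambda^p\cQ^\ast)$ ``yields $E$'' is precisely the nontrivial point that needs proof.

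The paper sidesteps all of this by invoking Grothendieck's resolvent-functor machinery (Proposition~\ref{groth1}, i.e.\ \cite[Prop.~2.5.3]{Tohoku}). One checks that $\mathbb F(\cN)=\Gamma(X,\cN\otimes\Lambda^\bullet\cQ^\ast)$ is a resolvent functor for $I^\cQ$, and applies the proposition with $F=I^\cQ$ and $A^\bullet=(\cI^\bullet)^\cK$, where $\cI^\bullet$ is an injective resolution of $\cM$ in $\Rep(\calL)$. The proposition then identifies the second spectral sequence of the double complex $C^{p,q}=\Gamma(X,(\cI^q)^\cK\otimes\Lambda^p\cQ^\ast)$ with the second hypercohomology spectral sequence of $I^\cQ$ on $A^\bullet$; the latter is the Grothendieck spectral sequence $E$ because the $(\cI^q)^\cK$ are $I^\cQ$-acyclic. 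On the other side, the $E_1$ page and $d_1$ of this same double-complex spectral sequence visibly match those of $\widetilde E$. No direct computation of $\widetilde E_2$ is needed, and the global/sheaf mismatch never arises.
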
 
To prove this theorem we use a construction by Grothendieck \cite[\S 2.5]{Tohoku}. 
Let $F\colon\mathfrak C\to \mathfrak C'$ be a left-exact additive functor between abelian categories, and assume that  $\mathfrak C$ has enough injectives. A {\em resolvent functor} for $F$ is an exact functor $\mathbb F\colon\mathfrak C\to K_{\ge 0}(\mathfrak C')$ (where $K_{\ge 0}(\mathfrak C')$ is the category of complexes of objects in $\mathfrak C'$ with vanishing terms in negative degree) with an augmentation morphism $F\to\mathbb F$  such that
\begin{enumerate} \item $F \to \ker \,(\mathbb F^0 \to \mathbb F^1)$ is an isomorphism; \item If $I$ is an injective object in $\mathfrak C$, then $H^i(\mathbb F (I)^\bullet)=0$ for $i>0$.
\end{enumerate} 

One has \cite[Prop.~2.5.3]{Tohoku}:

\begin{prop} Let $A^\bullet\in K_{\ge 0}(\mathfrak C)$. The first/second spectral sequence of the double complex $\mathbb F(A^\bullet)$ is functorially isomorphic to the first/second spectral sequence for the hypercohomology of  $F$ with respect to the complex $A^\bullet$. \label{groth1}
\end{prop}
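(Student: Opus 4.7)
The plan is to realize both pairs of spectral sequences as the canonical filtration spectral sequences of a single triple complex, so that they become isomorphic by construction. The defining conditions (1) and (2) of a resolvent functor are precisely what is needed to identify the edges of that triple complex with the hypercohomology data on one side and with $\mathbb F(A^\bullet)$ on the other.

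I would begin with the pointwise assertion that $H^r(\mathbb F(A)^\bullet)\simeq R^rF(A)$ functorially in $A\in\mathfrak C$. Fix an injective resolution $A\to J^\bullet$ and apply the exact functor $\mathbb F$ to obtain the first-quadrant double complex $\mathbb F(J^\bullet)^\bullet$. Taking cohomology first in the $J$-direction: exactness of $\mathbb F$ makes each $\mathbb F^r$ commute with cohomology, and since $J^\bullet$ resolves $A$ this page is concentrated in a single column with value $\mathbb F^r(A)$, so the total cohomology is $H^r(\mathbb F(A)^\bullet)$. Taking cohomology first in the $\mathbb F$-direction: condition (2) makes each column acyclic in positive degrees, and condition (1) identifies the surviving row with $F(J^\bullet)$, so the total cohomology is $R^rF(A)$. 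Comparison gives the identification, and naturality of both constructions makes it functorial.

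For a bounded-below complex $A^\bullet$, choose a Cartan-Eilenberg resolution $A^\bullet\to J^{\bullet,\bullet}$ and form the triple complex $T^{p,q,r}:=\mathbb F(J^{p,q})^r$. By definition, the two hypercohomology spectral sequences of $F$ applied to $A^\bullet$ are the two filtration spectral sequences of the double complex $F(J^{\bullet,\bullet})$, and the augmentation $F\to\mathbb F^0$ extended by condition (2) gives, in each bidegree $(p,q)$, a quasi-isomorphism $F(J^{p,q})\to T^{p,q,\bullet}$; totalizing yields a comparison quasi-isomorphism $\mathrm{Tot}\,F(J^{\bullet,\bullet})\to\mathrm{Tot}\,T$. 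Dually, exactness of $\mathbb F$ together with the fact that $A^p\to J^{p,\bullet}$ is a resolution means $\mathbb F^r(A^p)\to T^{p,\bullet,r}$ is a resolution for every $(p,r)$, which delivers a second comparison quasi-isomorphism $\mathrm{Tot}\,\mathbb F(A^\bullet)\to\mathrm{Tot}\,T$. In each case the comparison morphism respects the filtration by $p$, respectively by the complementary index; consequently the three associated pairs of spectral sequences agree from the $E_1$ or $E_2$ page onwards, which is exactly the claim.

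The main obstacle will be the careful filtration bookkeeping on the triple complex: one must verify that the two comparison quasi-isomorphisms are compatible with \emph{both} filtrations of interest, so that the identification is one of filtered complexes rather than merely of associated graded $E_2$-pages, and hence persists to all higher pages. Once this is arranged, functoriality in $A^\bullet$ follows because Cartan-Eilenberg resolutions may be chosen functorially up to chain homotopy, and a homotopy-commutative square of such data induces a well-defined map on spectral sequences from $E_2$ onwards, so the isomorphism of spectral sequences is natural in $A^\bullet$.
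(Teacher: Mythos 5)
First, a remark on the comparison you were asked about: the paper does not prove this proposition at all --- it is quoted from Grothendieck's Tohoku paper, and the citation \cite[Prop.~2.5.3]{Tohoku} is the paper's entire justification. Your proposal therefore reconstructs an omitted argument, and the route you chose --- the triple complex $T^{p,q,r}=\mathbb F(J^{p,q})^r$ built from a Cartan--Eilenberg resolution, with comparison maps from $\operatorname{Tot}F(J^{\bullet,\bullet})$ and from $\operatorname{Tot}\mathbb F(A^\bullet)$ --- is essentially the classical one. Your Step 1 is correct: both spectral sequences of the first-quadrant double complex $\mathbb F(J^\bullet)^\bullet$ degenerate, one because each $\mathbb F^r$ is exact, the other by conditions (1) and (2), giving $H^r(\mathbb F(A)^\bullet)\simeq R^rF(A)$. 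The slice-wise quasi-isomorphisms you record are also correct, and for the filtration by $p$ they do yield what is needed: on the associated graded $E_0^p$ the comparison maps are totalizations of the quasi-isomorphisms $F(J^{p,q})\to T^{p,q,\bullet}$, respectively of the resolutions $\mathbb F^r(A^p)\to T^{p,\bullet,r}$, so both comparisons are isomorphisms on $E_1$ and the first spectral sequences are identified from $E_1$ onwards.

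The gap is the inference signalled by ``consequently'', and your closing paragraph misdescribes what remains to be done. Compatibility with the filtrations is not the issue: a filtered quasi-isomorphism induces a morphism of spectral sequences, but in general an isomorphism at no page whatsoever (take the identity map of a complex $C$ with $H(C)\neq 0$, filtered trivially on the source and with the filtration shifted by one step on the target; this is a filtered quasi-isomorphism, yet the induced map on every page is zero). So one must exhibit a specific page on which each comparison map is an isomorphism, and for the complementary filtration that page is genuinely not $E_1$: the source of your first comparison has $E_1$ terms $H^t(F(J^{\bullet,q}))$, whereas the target has $\bigoplus_{q+r=s}H^t(\mathbb F^r(J^{\bullet,q}))$, and these differ. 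The isomorphism appears only at $E_2$, and proving it there requires the structural facts about Cartan--Eilenberg resolutions that your argument never invokes: the rows split into injective cycles, boundaries and cohomologies, so $H^t(F(J^{\bullet,q}))\simeq F(H^t(J^{\bullet,q}))$ and, by exactness of $\mathbb F^r$, $H^t(\mathbb F^r(J^{\bullet,q}))\simeq \mathbb F^r(H^t(J^{\bullet,q}))$; moreover the row-cohomology objects $H^t(J^{\bullet,q})$, $q\ge 0$, form an injective resolution of $H^t(A^\bullet)$. Applying your Step 1 to $H^t(A^\bullet)$ with this injective resolution identifies the $E_2$ terms on both sides with $R^sF(H^t(A^\bullet))$ and shows that the comparison map realizes this identification; the same scheme handles the second comparison map, whose source has $E_1$ terms $\mathbb F^r(H^t(A^\bullet))$ by exactness. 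With these verifications added (and with the functoriality argument sharpened so that the homotopies between lifts are homotopies of double complexes, hence act trivially on spectral sequences), your proof is complete; as written, the assertion for the second spectral sequences is not established.
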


\noindent{\em Proof of Theorem \ref{ssai}.} We note that the functor 
\begin{align}   \mathbb F\colon \Rep(\cQ) & \to    K_{\ge 0}(\kmod) \\
\cM & \mapsto  \Gamma(X, \cM\otimes  \Lambda^\bullet \cQ^\ast )
\end{align}
is a resolvent functor for $I^{\cQ}$. Let    $\cM$  be an object in $\Rep(\calL)$,
and let $\cI^\bullet$ be an injective resolution of $\cM$. 
We apply  Proposition \ref{groth1} with 
$$F=I^{\cQ}\colon\Rep(\cQ)\to\kmod \qquad\mbox{and}\qquad A^\bullet = (\cI^\bullet)^{\cK}.$$
The double complex $C=\mathbb F(A^\bullet)$ is
$$C^{p,q} = \Gamma(X,(\cI^q)^{\cK}\otimes_{\cO_X}\Lambda^p \mathfrak \cQ^\ast).$$
The first and second pages of the second spectral sequence are given by \eqref{1stpageAlg} and
\eqref{2ndpageAlg}, respectively. The $d_1$ differential of the second spectral sequence of $C$  coincides with that of the spectral sequence $\widetilde E$ (they essentially are the differential of the Chevalley-Eilenberg complex of $\cQ$), so that the two   spectral
sequences are isomorphic. On the other hand, the $\cQ$-modules $(\cI^\bullet)^{\cK}$ are $I^\cQ$-acyclic, so that the hypercohomology
of $I^\cQ$ with respect to the complex $ (\cI^\bullet)^{\cK}$ is
$$ H^i(F((\cI^\bullet)^{\cK})) = H^i(I^\calL(\cI^\bullet))) = 
 \mathbb  H^i(\calL;\cM).$$
So the spectral sequences $\widetilde E$ and $E$ coincide, and Theorem \ref{ssai} is proved.\qed

\begin{remark} Proposition \ref{groth1} also provides a straightforward, albeit less transparent, proof of 
 our Theorem \ref{main}. One sets
$$ \mathfrak C = \Rep(\calL), \quad
\mathfrak C' = \kmod, \quad F = I^\calL, \quad \mathbb F^i(\cM) = \Gamma(X,\cM\otimes_{\cO_X} \Lambda^i\calL^\ast)$$
and takes for $A^\bullet$ an injective resolution $\cI^\bullet$ of a representation $\cM$ of $\calL$.
The second spectral sequence of the double complex $\mathbb F(\cI^\bullet)$ degenerates
and converges to the hypercohomology $\mathbb H^\bullet (\calL;\cM)$, while
the hypercohomology of $F$ with respect to the complex $\cI^\bullet$ is given by the right derived functors
$RI^\calL(\cM)$.
\end{remark}

\begin{remark} Evidently these constructions define a spectral sequence attached to an exact sequence of Lie-Rinehart algebras
(the case of Lie-Rinehart algebras obtained as global sections of  a $C^\infty$   Lie algebroid was treated in \cite{MK87}, while 
the case of ideals in the Atiyah-Lie-Rinehart algebra of a module was treated in 
 \cite{Roub80}).  So, if 
 $$ 0 \to K \to L \to Q \to 0$$
 is an exact sequence of $(\Bbbk,A)$-Lie-Rinehart algebras, $K$ is an ideal in $L$, and $M$ is an $L$-module, there is a spectral sequence with terms in $\Bbbk${\bf -mod} whose first two pages are
 $$E_1^{p,q} = \Lambda^p Q^\ast \otimes H^{q}(K;M),\qquad 
E_2^{p,q} =  H^p(Q;H^{q}(K;M)).$$
\end{remark}

\subsection{A local-to-global spectral sequence} If $\calL$ is a Lie algebroid on a scheme $X$, the functor $$I^\calL\colon\Rep(\calL)\to\kmod, \qquad \cM\to\Gamma(X,\cM^\calL), $$ is itself a composition: 
$$  \Rep(\calL) \xrightarrow{(-)^\calL }\Bbbk_X\mbox{\bf -mod} \xrightarrow{\Gamma} \kmod.$$
The derived functors of $(-)^\calL $ are
$$R^i(-)^\calL \colon\Rep(\calL) \to \Bbbk_X\mbox{\bf -mod},\qquad \cM \mapsto \mathcal H^i(\calL;\cM)$$
and when $\cI$ is an injective object in $\Rep(\calL)$, one has $\mathcal H^i(\calL;\cI)=0$ for $i>0$ by the same argument we have already used. As a result, if $\cM$ is  a representation of $\calL$, there is a spectral sequence, converging to $\mathbb H^\bullet(\calL;\cM)$, whose second term is
$$E_2^{pq} = H^p(X,\mathcal H^q(\calL;\cM)).$$
This is a kind of local-to-global spectral sequence for Lie algebroid cohomology. Indeed, 
the sheaf $\mathcal H^i(\calL;\cM)$ is the sheafification of the presheaf which with an open set $U\subset X$ associates the degree $i$ cohomology of the $(\Bbbk,\cO_X(U)$-Lie-Rinehart algebra $\calL(U)$ with coefficients in $\cM(U)$.

As a particular case, we get a generalized de Rham theorem
\cite{BR-cohom}.

\begin{thm} Let $\calL$ be a locally free a Lie algebroid over $X$, and let $\cM$ be a representation of $\calL$.
Assume the complex $\cM^\bullet=\cM\otimes_{\cO_X}\Lambda^\bullet\calL^\ast $ is exact in positive degree,
i.e., it is a resolution of a $\Bbbk_X$-module $\mathcal C$. Then
$$\mathbb H^i(\calL;\cM ) \simeq H^i(X,\mathcal C),\qquad i\ \ge 0.$$
\end{thm}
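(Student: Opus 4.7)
The plan is to apply the local-to-global spectral sequence
\[
E_2^{pq} = H^p(X, \mathcal H^q(\calL;\cM)) \Rightarrow \mathbb H^{p+q}(\calL;\cM)
\]
just established, and show that under the stated hypothesis it degenerates at $E_2$, collapsing onto the bottom row.

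First I would identify the cohomology sheaves $\mathcal H^q(\calL;\cM) = R^q(-)^\calL(\cM)$ with the cohomology sheaves $H^q(\cM^\bullet)$ of the Chevalley-Eilenberg-de Rham complex. The author has already noted that $\mathcal H^q(\calL;\cM)$ is the sheafification of the presheaf $U \mapsto H^q(\calL(U);\cM(U))$, and since sheafification commutes with taking cohomology of a complex of sheaves, this sheafified presheaf is precisely $H^q(\cM^\bullet)$. For $q=0$ this is immediate from the definition $\cM^\calL = \ker(d_\rho\colon\cM\to\cM\otimes_{\cO_X}\calL^\ast)$; for higher $q$, one argues stalkwise, using that for each $x\in X$ Rinehart's complex $\mathfrak U(\calL_x)\otimes_{\cO_{X,x}}\Lambda^\bullet\calL_x$ is a projective resolution of $\cO_{X,x}$ (the same input used in the proof of Theorem \ref{main}), so that $\mathcal H^q(\calL;\cM)_x \simeq H^q(\cM^\bullet_x) = H^q(\cM^\bullet)_x$.

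Once this identification is in place, the hypothesis that $\cM^\bullet$ resolves $\cC$ gives
\[
\mathcal H^0(\calL;\cM)\simeq \cC, \qquad \mathcal H^q(\calL;\cM) = 0 \text{ for } q>0.
\]
The $E_2$-page is then concentrated in the row $q=0$, with $E_2^{p,0} = H^p(X,\cC)$ and $E_2^{p,q}=0$ otherwise. All higher differentials necessarily vanish, so the spectral sequence degenerates and the edge morphism yields the desired isomorphism $\mathbb H^i(\calL;\cM) \simeq H^i(X,\cC)$ for every $i \ge 0$.

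The main obstacle is really just step one: confirming that the abstract derived functors $R^q(-)^\calL(\cM)$ coincide with the concrete cohomology sheaves $H^q(\cM^\bullet)$. Everything else is automatic from the collapse of a spectral sequence concentrated on a single row. The stalkwise argument is the cleanest way to handle it, as it reduces the question to the affine/Lie-Rinehart setting where Rinehart's resolution is available and Theorem \ref{main} (or rather its proof) gives exactly what is needed.
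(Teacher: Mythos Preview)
Your proposal is correct and follows exactly the paper's approach: use the local-to-global spectral sequence $E_2^{pq}=H^p(X,\mathcal H^q(\calL;\cM))\Rightarrow\mathbb H^{p+q}(\calL;\cM)$ and observe that the hypothesis forces it to collapse onto the row $q=0$. The paper's proof is a single sentence asserting $\mathcal H^i(\calL;\cM)=0$ for $i>0$; your version supplies the identification $\mathcal H^q(\calL;\cM)\simeq H^q(\cM^\bullet)$ that the paper leaves implicit (relying on the remark just before the theorem that $\mathcal H^i(\calL;\cM)$ sheafifies $U\mapsto H^i(\calL(U);\cM(U))$), so if anything you are being more careful than the original.
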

\begin{proof} One has  $\mathcal H^i(\calL;\cM)$ $=0$ for $i>0$, and then 
the above spectral sequence degenerates at $E_2$.
\end{proof}

\subsection{An extension to complexes} Actually Proposition \ref{groth1} also allows one to prove an extension of Theorem \ref{main} to complexes.
Given a locally free Lie algebroid $\calL$ on the scheme $X$, we denote $\operatorname{Krep}(\calL) = K_{\ge 0}(\Rep(\calL))$, while
$K(\Bbbk)$ will be the category of complexes of $\Bbbk$-vector spaces, again with vanishing terms in negative degree. Then the functor $I^\calL$ extends to a functor
$$ I^\calL \colon \operatorname{Krep}(\calL) \to K(\Bbbk)$$
and we have hyperfunctors $\mathbb R^i I^\calL \colon \operatorname{Krep}(\calL) \to \Bbbk\hbox{{\bf -mod}}$.

 \begin{thm} Let $\calL$ be a locally free a Lie algebroid over $X$.  For every complex $\cM^\bullet$ in $ \operatorname{Krep}(\calL)$ there are natural isomorphisms of functors $ \operatorname{Krep}(\calL) \to \Bbbk\hbox{{\bf -mod}}$
 $$ \mathbb R^i I^\calL (\cM^\bullet) \simeq \mathbb H^i(X,\mathcal T^\bullet),\qquad i \ge 0$$
 where $\mathcal T^\bullet$ is the total complex of the double complex  $\cM^\bullet \otimes_{\cO_X} \Lambda^\bullet\calL^\ast$.
\end{thm}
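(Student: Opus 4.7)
The plan is to mimic, in the setting of a complex of representations, the alternative proof of Theorem \ref{main} sketched in the Remark following Theorem \ref{ssai}. First I would choose a quasi-isomorphism $\cM^\bullet \to \cI^\bullet$ with $\cI^\bullet$ a bounded below complex of injective objects in $\Rep(\calL)$, which exists because $\Rep(\calL)$ has enough injectives. Write $\mathcal T_{\cI}^\bullet = \operatorname{Tot}\bigl(\cI^\bullet \otimes_{\cO_X} \Lambda^\bullet \calL^\ast\bigr)$ for the total complex of the associated double complex of sheaves and consider the induced map $\mathcal T^\bullet \to \mathcal T_{\cI}^\bullet$.

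The first step is to show that this map is a quasi-isomorphism. For each fixed $q$ the sheaf $\Lambda^q \calL^\ast$ is locally free, hence flat, so the induced map $\cM^\bullet \otimes \Lambda^q \calL^\ast \to \cI^\bullet \otimes \Lambda^q \calL^\ast$ is a quasi-isomorphism of complexes in the $p$-direction; the standard spectral sequence of a double complex then upgrades this to a quasi-isomorphism $\mathcal T^\bullet \to \mathcal T_{\cI}^\bullet$ of total complexes, so $\mathbb H^i(X, \mathcal T^\bullet) \simeq \mathbb H^i(X, \mathcal T_{\cI}^\bullet)$.

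The second step computes the right-hand side and identifies it with $\mathbb R^i I^\calL(\cM^\bullet)$. As noted in the proof of Theorem \ref{main}, each sheaf $\cI^p \otimes_{\cO_X} \Lambda^q \calL^\ast$ is an injective $\mathfrak U(\calL)$-module, hence flasque, hence $\Gamma$-acyclic; since the terms of $\mathcal T_{\cI}^\bullet$ are finite direct sums of such sheaves they too are $\Gamma$-acyclic, so $\mathbb H^i(X, \mathcal T_{\cI}^\bullet) \simeq H^i\bigl(\operatorname{Tot}(\Gamma(X, \cI^\bullet \otimes \Lambda^\bullet \calL^\ast))\bigr)$. Now I would run the second spectral sequence of the double complex $C^{p,q} = \Gamma(X, \cI^p \otimes \Lambda^q \calL^\ast)$, taking cohomology in $q$ first: for each fixed $p$, Theorem \ref{main} applied to the injective $\cI^p$, together with the $\Gamma$-acyclicity just invoked, gives $H^q(\Gamma(X, \cI^p \otimes \Lambda^\bullet \calL^\ast)) = 0$ for $q > 0$ and $= I^\calL(\cI^p)$ for $q = 0$. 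The spectral sequence therefore degenerates at $E_2$, yielding $E_2^{p,0} = H^p(I^\calL(\cI^\bullet)) = \mathbb R^p I^\calL(\cM^\bullet)$ by the definition of the right derived functor on a complex. This is also exactly what Proposition \ref{groth1} delivers with the resolvent functor $\mathbb F^q(\cM) = \Gamma(X, \cM \otimes_{\cO_X} \Lambda^q \calL^\ast)$ and $A^\bullet = \cI^\bullet$.

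The main technical ingredient is the $\Gamma$-acyclicity of the sheaves $\cI^p \otimes \Lambda^q \calL^\ast$, which relies on the fact already used in Theorem \ref{main} that an injective $\mathfrak U(\calL)$-module is flasque; it is this acyclicity that lets us trade the hypercohomology of $\mathcal T_{\cI}^\bullet$ for the cohomology of the double complex of global sections and thereby close the loop. Once this is in hand, the rest is bookkeeping, and naturality in $\cM^\bullet$ follows from the functoriality of injective resolutions in the derived category.
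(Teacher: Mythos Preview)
Your proposal is correct and follows essentially the same route as the paper: both invoke the resolvent functor $\mathbb F^q(\cM)=\Gamma(X,\cM\otimes_{\cO_X}\Lambda^q\calL^\ast)$ of Proposition~\ref{groth1} applied to an injective resolution, and compare the two spectral sequences of the resulting double complex. The paper's proof is a one-liner citing Proposition~\ref{groth1}; you have simply unpacked what that citation entails --- the quasi-isomorphism $\mathcal T^\bullet\to\mathcal T_{\cI}^\bullet$ via flatness of $\Lambda^q\calL^\ast$, the $\Gamma$-acyclicity of $\cI^p\otimes\Lambda^q\calL^\ast$, and the degeneration of the second spectral sequence --- so your argument is a faithful expansion rather than a different method.
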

\begin{proof} The two spectral sequences in Proposition \ref{groth1} converge to $ \mathbb RI^\calL (\cM^\bullet)$ and $ \mathbb H(X,\mathcal T^\bullet)$, respectively.
\end{proof}
 
 \bigskip 
\frenchspacing

\def\cprime{$'$} \def\cprime{$'$} \def\cprime{$'$} \def\cprime{$'$}

\end{document}